\documentclass[12pt,english]{article}

\usepackage[T1]{fontenc}
\usepackage[utf8]{inputenc}   
\usepackage{microtype}
\usepackage{comment}
\usepackage{tikz}
\usetikzlibrary{decorations.markings}
\usetikzlibrary{arrows.meta} 

\usepackage{graphicx} 
\usepackage{amsmath}
\usepackage{amsthm}
\usepackage{amssymb}
\usepackage{newtxtext}
\usepackage[cmintegrals,varg]{newtxmath} 
\theoremstyle{plain} 
\newtheorem{theorem}{Theorem}

\newtheorem{corollary}{Corollary}
\newtheorem{proposition}{Proposition}

\theoremstyle{remark} 

\usepackage[authoryear]{natbib}
\usepackage{tabularx}
\usepackage{siunitx}
\usepackage{booktabs}

\usepackage{geometry}
\usepackage{setspace}
\makeatother

\title{The Parabolic Mellin Transform: Gamma and Zeta Integral Representations}

\author{Peter Reinhard Hansen$^{\mathsection}$\thanks{Corresponding author. Email: hansen@unc.edu. A preliminary version of this manuscript was circulated as the arXiv preprint ``Complex Moments, Gamma and Riemann Zeta Functions unified by the Parabolic Mellin Transform.'' The present manuscript substantially revises the exposition, develops the contour-theoretic PMT formulation, and supersedes the earlier Gamma-function preprint \cite{HansenTong:2025ReciprocalGamma}.} \and Chen Tong$^{\ddagger}$
\\ \small $^{\mathsection}$Department of Economics, University of North Carolina at Chapel Hill
\\[-0.2cm] \small $^{\ddagger}$School of Economics, Xiamen University}

\date{\today}

\begin{document}

\maketitle

\begin{abstract}
We introduce the Parabolic Mellin Transform (PMT), defined by $\mathcal{P}_{\sigma}[f](z)=\int_{-\infty}^{\infty}w^{2z}f(w^2)dt$, where $w=\sigma+it$ and $\sigma>0$. Under the substitution $u=w^2$, the vertical line $\operatorname{Re}(w)=\sigma$ is mapped to the parabolic contour $C_\sigma$ in the $u$-plane. For the Gaussian kernel, the PMT yields $\int_{-\infty}^{\infty}w^{2z}e^{w^2}dt=\pi/\Gamma(\tfrac{1}{2}-z)=\cos(\pi z)\Gamma(z+\tfrac{1}{2})$, a parabolic-contour form of the classical Hankel representation for the reciprocal Gamma function. The advantage of this parametrization is that the contour integral becomes a Gaussian-damped vertical-line integral. 
We develop scaling, differentiation, and Dirichlet-composition identities for the PMT and use them to derive integral representations of the Hurwitz zeta, Riemann zeta, and Dirichlet eta functions. The framework provides a unified transform dictionary for Gamma-type and zeta-type special functions and yields equivalent reformulations of the Riemann hypothesis and the Lindel\"of hypothesis in terms of zeros and growth of parabolic-contour integrals.
\end{abstract}

\noindent%
{\it Keywords:} Parabolic Mellin transform; Gamma function; reciprocal Gamma function; Hankel contour; Hurwitz zeta function; Riemann zeta function; Dirichlet eta function; Lindel\"of hypothesis

\setlength{\abovedisplayskip}{8pt}
\setlength{\belowdisplayskip}{8pt}
\setlength{\abovedisplayshortskip}{5pt}
\setlength{\belowdisplayshortskip}{5pt}
\newpage{}
\section{Introduction}

We introduce the Parabolic Mellin Transform (PMT), a Mellin-type contour transform defined formally in Section \ref{sec:PMT}. It is based on the quadratic change of variables $u=w^2$, which sends the vertical line $w=\sigma+it$ to the parabolic contour $C_\sigma=\{(\sigma+it)^2:t\in\mathbb{R}\}$ in the $u$-plane. Since $\operatorname{Re}(w^2)=\sigma^2-t^2$, exponential kernels such as $e^u$ become Gaussian-damped vertical-line integrals.

The basic example is the Gaussian kernel. Writing $G(z)=\mathcal{P}_{\sigma}[e^u](z)$, we prove
$$
\frac{1}{\Gamma(s)}
=
\frac{1}{\pi}G(\tfrac{1}{2}-s),
\qquad
\Gamma(s)\sin(\pi s)
=
G(s-\tfrac{1}{2}).
$$
Thus the same entire function $G$ represents the Gamma function and its reciprocal at reflected arguments. Equivalently, $G(z)=\pi/\Gamma(\tfrac{1}{2}-z)=\cos(\pi z)\Gamma(z+\tfrac{1}{2})$, with removable singularities in the last expression.

We then use the scaling identity $G(z,\alpha)=\alpha^{-(z+1/2)}G(z)$ to connect the PMT with Dirichlet factors. This leads to parabolic integral representations of Hurwitz zeta, Riemann zeta, and Dirichlet eta functions. In particular, for suitable $\sigma$
$$
R(z,a)=G(z)\zeta(s,a),
\qquad
D(z,a)=G(z)\eta(s,a),\qquad s=z+\tfrac{1}{2},
$$
where $R$ and $D$ are the PMTs of $e^{au}/(1-e^u)$ and $e^{au}/(1+e^u)$, respectively.

These identities are closely related to classical Hankel-contour representations. The point is not to give a new analytic continuation of the Gamma or zeta functions, but to express the corresponding Hankel-type integrals through the parabolic parametrization $u=w^2$. This reorganization has three useful features: the quadratic parametrization supplies Gaussian endpoint decay on a vertical line; the Gaussian transform $G$ becomes a common normalizing factor for Gamma-type and zeta-type identities; and the shifted coordinate $z=s-\tfrac{1}{2}$ centers the critical strip at zero.

Finally, the factorization $R(z)=G(z)\zeta(z+\tfrac{1}{2})$ gives exact reformulations of standard critical-strip questions. Since $G$ has no zeros in $|\operatorname{Re}(z)|<\tfrac{1}{2}$, the zeros of $R$ in this strip coincide with the nontrivial zeros of $\zeta(z+\tfrac{1}{2})$. Similarly, the explicit identity $|G(i\tau)|=\sqrt{\pi\cosh(\pi\tau)}$ converts the Lindel\"of hypothesis into an equivalent growth condition for $R(i\tau)$. These are reformulations in PMT coordinates, not new zero-free regions or bounds.

\section{The Parabolic Mellin Transform}
\label{sec:PMT}

This section provides the formal definition of the transform and records the two analytic facts used throughout the paper: holomorphy under Gaussian decay and the dependence of the transform on the contour parameter $\sigma$. Throughout, $\log$ denotes the principal logarithm.

Fix $\sigma>0$ and write $w=\sigma+it$, $t\in\mathbb{R}$. For a function $f$ defined on the image of the map $w\mapsto w^2$, define
\begin{equation}
\mathcal{P}_{\sigma}[f](z)
=
\int_{-\infty}^{\infty}w^{2z}f(w^2)dt,
\qquad w=\sigma+it,
\label{eq:pmt-definition}
\end{equation}
whenever the integral exists. The branch of $w^{2z}$ is fixed by $\arg(w)\in(-\pi/2,\pi/2)$.

Let $C_{\sigma}=\{(\sigma+it)^2:t\in\mathbb{R}\}$ be the image of the vertical line $\operatorname{Re}(w)=\sigma$ under $u=w^2$. Thus $C_{\sigma}$ defines a left-opening parabolic contour with vertex at $\sigma^2$, illustrated in Figure \ref{fig:parabolic_contour}.
\begin{figure}[ht]
\centering
\begin{tikzpicture}[scale=0.9,
    > = stealth,
    midarrow/.style={
        decoration={markings, mark=at position 0.58 with {\arrow{>}}},
        postaction={decorate}
    }
]
    \draw[->] (-6.8,0) -- (2.6,0) node[below] {$\operatorname{Re}(u)$};
    \draw[->] (0,-4.6) -- (0,4.6) node[left] {$\operatorname{Im}(u)$};
    \draw[densely dotted, red, line width=1.0pt] (-6.5,0) -- (0,0);
    \node[below] at (-3.2,0) {\small $(-\infty,0]$};
    \draw[blue, line width=1.1pt, midarrow]
        plot[domain=-2.45:2.45, samples=120] ({1-\x*\x}, {2*\x});

    \fill (0,0) circle (1.2pt);
    \node[below right] at (0,0) {$0$};
    \fill[blue] (1,0) circle (1.4pt);
    \node[below right, blue] at (1,0) {$\sigma^2$};
    \node[blue] at (-1.9,2.1) {$C_\sigma$};
\end{tikzpicture}
\caption{The contour $C_\sigma=\{(\sigma+it)^2:t\in\mathbb{R}\}$ in the $u$-plane, oriented by increasing $t$, together with the branch cut $(-\infty,0]$.}
\label{fig:parabolic_contour}
\end{figure}

The principal branch of $u^{z-1/2}$ is well defined on $C_\sigma$. With $u=w^2$, we have $du=2iwdt$ and $w=u^{1/2}$ on $C_\sigma$, where the square root is the branch induced by $\operatorname{Re}(w)>0$. Hence
\begin{equation}
\mathcal{P}_{\sigma}[f](z)
=
\frac{1}{2i}\int_{C_{\sigma}}u^{z-1/2}f(u)du.
\label{eq:pmt-contour-form}
\end{equation}

Since $\operatorname{Re}(w^2)=\sigma^2-t^2$, kernels with exponential decay as $\operatorname{Re}(u)\to-\infty$ along $C_\sigma$ lead to Gaussian decay in the vertical-line parameter.

\begin{proposition}
\label{prop:pmt-entire}
Fix $\sigma>0$. Suppose that $f$ is holomorphic in an open neighborhood of $C_\sigma$ and that, for some constants $A,\eta>0$, $|f((\sigma+it)^2)|\leq A\exp(-\eta t^2)$, for $t\in\mathbb{R}$. 
Then $\mathcal{P}_\sigma[f](z)$ exists for every $z\in\mathbb{C}$ and defines an entire function of $z$. Moreover, for every integer $m\geq0$,
\begin{equation}
\frac{d^m}{dz^m}\mathcal{P}_\sigma[f](z)
=
\int_{-\infty}^{\infty}
(2\log w)^m w^{2z}f(w^2)dt,
\qquad w=\sigma+it.
\label{eq:pmt-derivatives}
\end{equation}
\end{proposition}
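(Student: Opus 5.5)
The plan is to treat $\mathcal{P}_\sigma[f](z)$ as a parameter-dependent integral $\int_{\mathbb{R}} F(z,t)\,dt$ with $F(z,t)=w^{2z}f(w^2)$, $w=\sigma+it$. I will prove holomorphy by Morera's theorem together with Fubini, or equivalently by differentiation under the integral sign with a locally uniform dominating function. The derivative formula \eqref{eq:pmt-derivatives} then follows by induction on $m$. The holomorphy hypothesis on $f$ near $C_\sigma$ is not really needed for this statement; only continuity of $t\mapsto f((\sigma+it)^2)$ (for measurability) and the Gaussian bound are used.

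The first step is a pointwise estimate for the kernel. Since $\operatorname{Re}(w)=\sigma>0$, the principal branch gives
\[
\log w=\log|w|+i\arg w,\qquad |\arg w|<\pi/2,\qquad |w|=\sqrt{\sigma^2+t^2}.
\]
Writing $z=x+iy$, we have $w^{2z}=\exp(2z\log w)$, and therefore
\[
|w^{2z}|=\exp\bigl(2x\log|w|-2y\arg w\bigr)\le (\sigma^2+t^2)^{x}e^{\pi|y|}.
\]
For each fixed $t$, the map $z\mapsto w^{2z}$ is entire, with $z$-derivative $(2\log w)\,w^{2z}$. Moreover,
\[
|2\log w|\le \log(\sigma^2+t^2)+\pi+2|\log\sigma|\cdot 0+\text{const},
\]
so $|2\log w|$ grows at most like $\log(1+t^2)$ plus a constant, uniformly in $t$.

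The second step builds a dominating function on compact sets. Fix a compact $K\subset\mathbb{C}$ with $|x|\le X$ and $|y|\le Y$ on $K$. Then for every integer $m\ge0$ and all $z\in K$,
\[
\bigl|(2\log w)^m w^{2z}f(w^2)\bigr|\le A\,e^{\pi Y}\,\bigl(c_1+\log(\sigma^2+t^2)\bigr)^m\,\max\bigl\{(\sigma^2+t^2)^{X},(\sigma^2+t^2)^{-X}\bigr\}\,e^{-\eta t^2}=:g_{m,K}(t).
\]
The factor $(\sigma^2+t^2)^{-X}$ is bounded by $\sigma^{-2X}$ because $\sigma>0$, so no singularity appears at $t=0$. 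Since polynomial and logarithmic growth is beaten by $e^{-\eta t^2}$, we get $g_{m,K}\in L^1(\mathbb{R})$. Taking $m=0$ shows that the integral converges absolutely for every $z$, and that $\mathcal{P}_\sigma[f]$ is continuous by dominated convergence.

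The third step establishes holomorphy and the derivative formula. For any closed triangle $\Delta\subset K$, the bound $g_{0,K}$ lets Fubini interchange $\oint_{\partial\Delta}dz$ and $\int dt$; since each $z\mapsto F(z,t)$ is entire, the inner integral vanishes, and Morera's theorem gives that $\mathcal{P}_\sigma[f]$ is entire. For the derivative formula, I argue by induction on $m$. Given the formula for $m$, the integrand $(2\log w)^m w^{2z}f(w^2)$ has $z$-derivative $(2\log w)^{m+1}w^{2z}f(w^2)$. The mean value inequality on a small disc, combined with the bound $g_{m+1,K}$, justifies differentiating under the integral sign by dominated convergence. Alternatively, I can apply Morera and Cauchy's formula for derivatives directly, using the same domination.

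The only point needing care, which I expect to be the main (mild) obstacle, is the uniform control of $|w^{2z}|$ on compact sets. Specifically, one must check that the imaginary part of $z$ only contributes the bounded factor $e^{\pi|y|}$, thanks to $|\arg w|<\pi/2$, and that negative real parts of $z$ do not create integrability problems near $t=0$, thanks to $|w|\ge\sigma>0$. Once the dominating functions $g_{m,K}$ are in hand, everything else is standard.
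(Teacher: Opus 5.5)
Your proposal is correct and follows essentially the same route as the paper: the bound $|w^{2z}|\le(\sigma^2+t^2)^{x}e^{\pi|y|}$ (from $|\arg w|<\pi/2$ and $|w|\ge\sigma>0$), combined with the Gaussian decay, gives an integrable majorant locally uniform in $z$, also after multiplying by powers of $\log w$, and this justifies holomorphy and differentiation under the integral sign. The only blemish is cosmetic: your displayed bound on $|2\log w|$ is garbled and should read $|2\log w|\le|\log(\sigma^2+t^2)|+\pi$, which is still $O(\log(2+t^2))$ with a constant depending on $\sigma$, so the domination argument is unaffected.
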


\begin{proof}
Let $K\subset\mathbb{C}$ be compact. For $z\in K$,
$$
|w^{2z}|
=
\exp\{2\operatorname{Re}(z)\log|w|-2\operatorname{Im}(z)\arg(w)\}
\leq B_K(1+|t|)^{N_K},
$$
because $\arg(w)\in(-\pi/2,\pi/2)$. Combining this bound with the assumed Gaussian decay gives an integrable
majorant, uniformly for $z\in K$. The same argument applies after multiplication by any fixed power of $\log w$. Hence the integral converges locally uniformly in $z$, and differentiation under the integral sign gives (\ref{eq:pmt-derivatives}). The resulting function is entire.
\end{proof}

For example, the Gaussian kernel $f(u)=e^u$ satisfies $|e^{(\sigma+it)^2}|=e^{\sigma^2-t^2}$, which is the convergence mechanism in the Gamma-function representation below.

The notation retains the dependence on $\sigma$. Independence of $\sigma$ is a contour-deformation property and is not automatic.

\begin{proposition}
\label{prop:pmt-contour-deformation}
Let $0<\sigma_0<\sigma_1$, and suppose that $f$ is holomorphic in a neighborhood of
$\{w^2:\sigma_0\leq\operatorname{Re}(w)\leq\sigma_1\}$. Assume also that for every compact $K\subset\mathbb{C}$ there exist constants $A,\eta>0$ and $N\geq0$ such that
$$
|w^{2z}f(w^2)|
\leq
A(1+|\operatorname{Im}(w)|)^N
\exp(-\eta|\operatorname{Im}(w)|^2)
$$
for all $z\in K$ and all $w$ with $\sigma_0\leq\operatorname{Re}(w)\leq\sigma_1$. Then
$$
\mathcal{P}_{\sigma_0}[f](z)=\mathcal{P}_{\sigma_1}[f](z),
\qquad z\in\mathbb{C}.
$$
\end{proposition}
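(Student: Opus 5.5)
We work in the $w$-plane rather than the $u$-plane, which avoids the branch cut. Fix $z\in\mathbb{C}$ and set $F(w)=w^{2z}f(w^2)$ on the closed strip $S=\{\sigma_0\leq\operatorname{Re}(w)\leq\sigma_1\}$. Here $w^{2z}=\exp(2z\log w)$ with the principal logarithm. Since $\operatorname{Re}(w)\geq\sigma_0>0$ on $S$, $\log w$ is holomorphic on a neighborhood of $S$. The map $w\mapsto w^2$ sends such a neighborhood into the neighborhood of $\{w^2:\sigma_0\leq\operatorname{Re}(w)\leq\sigma_1\}$ where $f$ is holomorphic, after shrinking it if necessary. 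This shrinking is harmless because the image of $S$ is closed. Hence $F$ is holomorphic on an open set containing $S$.

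Note that $\mathcal{P}_\sigma[f](z)=\int F(\sigma+it)\,dt=\frac{1}{i}\int_{\operatorname{Re}w=\sigma}F(w)\,dw$. The claim is therefore that the vertical-line integrals of $F$ at $\sigma_0$ and at $\sigma_1$ agree.

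For $T>0$, apply Cauchy's theorem to the rectangle with vertices $\sigma_0\pm iT$ and $\sigma_1\pm iT$. This gives
$$
\int_{-T}^{T}F(\sigma_1+it)\,dt-\int_{-T}^{T}F(\sigma_0+it)\,dt
=\frac{1}{i}\Big(\int_{\sigma_0}^{\sigma_1}F(x+iT)\,dx-\int_{\sigma_0}^{\sigma_1}F(x-iT)\,dx\Big),
$$
up to sign conventions for the orientation. We then let $T\to\infty$ and treat the two sides separately.

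On the left, the hypothesis with $K=\{z\}$ gives $|F(\sigma_j+it)|\leq A(1+|t|)^N e^{-\eta t^2}$. This is integrable, so the truncated integrals converge to $\mathcal{P}_{\sigma_j}[f](z)$ by dominated convergence. Existence of these integrals also follows from Proposition \ref{prop:pmt-entire}.

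On the right, each horizontal segment has length $\sigma_1-\sigma_0$. The same uniform bound, valid for all $w$ in the strip, gives
$$
\Big|\int_{\sigma_0}^{\sigma_1}F(x\pm iT)\,dx\Big|\leq(\sigma_1-\sigma_0)\,A(1+T)^N e^{-\eta T^2}\to0.
$$

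The only step requiring care is the role of the hypothesis in controlling the horizontal edges. A bound valid merely on the two boundary lines would not suffice: the estimate must hold uniformly across the whole strip, which is exactly what is assumed. With that in hand, the equality holds for every $z$. The uniformity in $z\in K$ is not needed for this pointwise identity. It does, however, show that both sides are entire and that they agree as functions.
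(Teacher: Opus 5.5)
Your proposal is correct and follows the paper's own argument: Cauchy's theorem for $w^{2z}f(w^2)$ on the rectangle $\sigma_0\le\operatorname{Re}(w)\le\sigma_1$, $|\operatorname{Im}(w)|\le T$, with the horizontal sides killed by the uniform bound across the strip as $T\to\infty$. You write out the orientation and the dominated-convergence step more explicitly than the paper does, but the route is the same.
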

\begin{proof}
For fixed $z$, apply Cauchy's theorem to $h_z(w)=w^{2z}f(w^2)$ on the rectangle
$$
\{w:\sigma_0\leq\operatorname{Re}(w)\leq\sigma_1,\ |\operatorname{Im}(w)|\leq T\}.
$$
The vertical sides give $i\mathcal{P}_{\sigma_1}[f](z)$ and $-i\mathcal{P}_{\sigma_0}[f](z)$ after truncation. The horizontal sides vanish as $T\to\infty$ by the assumed bound. Hence the two vertical-line integrals agree. The argument is locally uniform in $z$ on compact sets.
\end{proof}

For meromorphic kernels the same deformation produces residue terms. If $f$ is meromorphic in the strip above and has no poles on the boundary contours, then
$$
\mathcal{P}_{\sigma_1}[f](z)-\mathcal{P}_{\sigma_0}[f](z)
=
2\pi
\sum_{\sigma_0<\operatorname{Re}(w)<\sigma_1}
\operatorname{res}_{w}
\{w^{2z}f(w^2)\},
$$
where the sum is over the poles in the vertical strip. Equivalently, in the $u$-plane,
\begin{equation}
\mathcal{P}_{\sigma_1}[f](z)-\mathcal{P}_{\sigma_0}[f](z)
=
\pi
\sum_{\rho}
\operatorname{res}_{u=\rho}
\{u^{z-1/2}f(u)\},
\label{eq:pmt-residue-change-u}
\end{equation}
where the sum is over the poles crossed by the deformation from $C_{\sigma_0}$ to $C_{\sigma_1}$.

Thus entire kernels such as $f(u)=e^{\alpha u}$, with $\operatorname{Re}(\alpha)>0$, give $\sigma$-independent transforms. In contrast, the kernels
$$
\frac{e^{au}}{1-e^u},
\qquad
\frac{e^{au}}{1+e^u},
$$
are meromorphic. Their transforms are invariant only under deformations of $C_\sigma$ that do not cross poles; crossing a pole changes the value by the corresponding residue contribution in (\ref{eq:pmt-residue-change-u}).

One practical advantage of the parabolic parametrization is that, for the kernels considered here, the transform is represented by an ordinary vertical-line integral with $\sigma>0$ fixed. For entire kernels the value is independent of $\sigma$, while for meromorphic kernels it is independent of $\sigma$ within a fixed pole-free contour class. This avoids having to define the contour integral directly as a limiting Hankel keyhole contour integral.

\section{The Gaussian transform}
\label{sec:gaussian-transform}

Let
\begin{equation}
G(z)=\mathcal{P}_\sigma[e^u](z)
=
\int_{-\infty}^{\infty}w^{2z}e^{w^2}dt,
\qquad w=\sigma+it.
\label{eq:gaussian-pmt}
\end{equation}
By Proposition \ref{prop:pmt-contour-deformation}, the right-hand side is independent of $\sigma>0$. We use the shifted variable
$$
s=z+\tfrac{1}{2}.
$$

\begin{theorem}[Gaussian PMT]
\label{thm:gaussian-pmt}
For all $z\in\mathbb{C}$, with $s=z+\tfrac{1}{2}$,
\begin{equation}
G(z)=\frac{\pi}{\Gamma(1-s)}
=
\Gamma(s)\sin(\pi s).
\label{eq:G-s-form}
\end{equation}
Equivalently, for all $s\in\mathbb{C}$,
$\frac{1}{\Gamma(s)}=\frac{1}{\pi}G(\tfrac{1}{2}-s)$ and $\Gamma(s)\sin(\pi s)
=G(s-\tfrac{1}{2})$.
\end{theorem}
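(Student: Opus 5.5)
Our plan is to reduce $G$ to Hankel's loop integral for $1/\Gamma$ and then finish with the reflection formula. By the contour form (\ref{eq:pmt-contour-form}),
$$
G(z)=\frac{1}{2i}\int_{C_\sigma}u^{z-1/2}e^{u}\,du=\frac{1}{2i}\int_{C_\sigma}u^{-(1-s)}e^{u}\,du,\qquad s=z+\tfrac12 .
$$
Hankel's formula states that
$$
\frac{1}{\Gamma(\zeta)}=\frac{1}{2\pi i}\int_{H}u^{-\zeta}e^{u}\,du ,
$$
where $H$ starts at $-\infty$ below the cut, encircles $0$ counterclockwise, and returns to $-\infty$ above the cut. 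The parabola $C_\sigma$, oriented by increasing $t$, runs from $\operatorname{Im}u=-\infty$ at $\operatorname{Re}u\to-\infty$ to $\operatorname{Im}u=+\infty$. So it is exactly such a contour, once we check that it is homotopic to $H$ in $\mathbb{C}\setminus(-\infty,0]$ with vanishing contributions at infinity.

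\textbf{Step 1: deformation.} We justify the deformation of $C_\sigma$ into a keyhole contour. The pieces joining the two contours near $\operatorname{Re}u=-R$ have length $O(R^{1/2})$ on a segment where $|e^u|=e^{-R}$ and $|u^{-\zeta}|$ grows at most polynomially in $R$, so their contribution tends to zero. This gives
$$
G(z)=\frac{2\pi i}{2i}\cdot\frac{1}{\Gamma(1-s)}=\frac{\pi}{\Gamma(1-s)} .
$$

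\textbf{Step 2: avoiding Hankel's formula if necessary.} If we prefer not to quote Hankel's formula, we would argue directly. For $\operatorname{Re}(1-s)<1$, i.e.\ $\operatorname{Re}z>-\tfrac12$, we collapse $C_\sigma$ onto the two sides of the cut. On these sides $u=re^{\pm i\pi}$. The small circle around $0$ vanishes because $|u^{z-1/2}|\cdot|u|\to 0$. The two sides together contribute
$$
\frac{1}{2i}\bigl(e^{i\pi(z+1/2)}-e^{-i\pi(z+1/2)}\bigr)\int_0^\infty r^{z-1/2}e^{-r}\,dr=\sin(\pi s)\,\Gamma(s).
$$
This is the second form. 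It agrees with $\pi/\Gamma(1-s)$ by Euler's reflection formula, and extends to all $z$ by analytic continuation, since both $G$ (entire by Proposition \ref{prop:pmt-entire}) and $\pi/\Gamma(1-s)$ are entire. On the right-hand side $\Gamma(s)\sin(\pi s)$ the singularities are removable.

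\textbf{Step 3: reflected forms.} The "equivalently" statements follow by substitution. Replacing $s$ by $1-s$ in $G(s-\tfrac12)=\pi/\Gamma(1-s)$ gives $G(\tfrac12-s)=\pi/\Gamma(s)$. The second identity is the one already proved, rewritten in the variable $s$.

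\textbf{Main obstacle.} The only delicate points are the branch bookkeeping and the justification of the limits. One must confirm that the branch of $w^{2z}$ with $\arg w\in(-\pi/2,\pi/2)$ matches the principal branch of $u^{z-1/2}$, with $\arg u\to\pm\pi$ on the upper and lower sides. One must also control the arcs at infinity and at $0$, where we use $\operatorname{Re}z>-\tfrac12$ before continuing analytically.
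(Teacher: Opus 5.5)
Your proposal is correct and follows the paper's proof. You write $G$ as $\frac{1}{2i}\int_{C_\sigma}u^{-(1-s)}e^u\,du$, deform $C_\sigma$ to a Hankel contour in $\mathbb{C}\setminus(-\infty,0]$, apply Hankel's formula to get $\pi/\Gamma(1-s)$, and finish with Euler's reflection formula and substitution. Your $O(R^{1/2}e^{-R})$ tail estimate and your optional Step 2 only add detail the paper leaves implicit; Step 2 collapses the contour onto the cut for $\operatorname{Re}z>-\tfrac12$, the same jump computation the paper later uses for $R$ and $D$.
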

\begin{proof}
By (\ref{eq:pmt-contour-form}),
$G(z)=\frac{1}{2i}\int_{C_\sigma}e^u u^{z-1/2}du
=
\frac{1}{2i}\int_{C_\sigma}e^u u^{-(1-s)}du$. 
Deforming $C_\sigma$ to the standard Hankel contour and using the classical Hankel representation of $1/\Gamma$ \cite[5.9.2]{DLMF} gives $G(z)=\frac{\pi}{\Gamma(1-s)}$.
This deformation takes place in the slit plane $\mathbb{C}\setminus(-\infty,0]$.
For each fixed $\sigma>0$, the contour $C_\sigma$ has vertex at $\sigma^2>0$
and is homotopic in this slit plane to the standard Hankel contour about
$(-\infty,0]$. The Gaussian decay of $e^{w^2}$ along $w=\sigma+it$ provides
the tail control needed to justify the deformation of the unbounded contour.
Euler's reflection formula, \citet[5.5.3]{DLMF}, gives $\frac{\pi}{\Gamma(1-s)}=\Gamma(s)\sin(\pi s)$. 
The equivalent identities follow by replacing $z$ with $s-\tfrac{1}{2}$ or $\tfrac{1}{2}-s$.
\end{proof}

Theorem \ref{thm:gaussian-pmt} is a parabolic form of Hankel's reciprocal-gamma formula. Under the change of variables $u=w^2$, the PMT identity becomes a Hankel-type contour integral over $C_\sigma$, and deforming $C_\sigma$ to the standard Hankel contour gives the classical representation. Thus the result is not a new analytic continuation of $1/\Gamma(s)$, but a Gaussian-damped vertical-line parametrization of the same contour integral. It should also be compared with the classical vertical-line formula of Laplace,
$$
\frac{1}{\Gamma(s)}
=
\frac{1}{2\pi}
\int_{-\infty}^{\infty}(\sigma+it)^{-s}e^{\sigma+it}dt,
\qquad \sigma>0,
$$
valid for $\operatorname{Re}(s)>0$. This formula goes back to \citet[p. 264]{Laplace:1785} and \citet[p. 134]{Laplace:1812}; see also \citet[Section 63.8]{Nielsen:1906}, \citet{Pribitkin:2002}, and \citet[eq. 8.315.2]{GradshteynRyzhik:2007}.\footnote{The denominator in \citet[eq. 8.315.2]{GradshteynRyzhik:2007} has a typographical error: printed as $(a+it)^2$ rather than $(a+it)^z$.} In contrast with the Laplace integral, which relies on oscillation and is only conditionally convergent in part of its natural domain, the Gaussian PMT representation contains the factor $e^{w^2}$. Along $w=\sigma+it$,
$|e^{w^2}|=e^{\sigma^2-t^2}$, giving absolute convergence for all $z\in\mathbb{C}$. This Gaussian damping is the feature used below when Dirichlet-type kernels are combined with the Gaussian PMT.

The identity in Theorem \ref{thm:gaussian-pmt} immediately gives the zero set
and logarithmic derivative of $G$.

\begin{corollary}
\label{cor:G-consequences}
The function $G$ is entire and has simple zeros at
$z=\tfrac{1}{2}+n$, $n\in\mathbb{N}_0$. Away from these zeros,
\begin{equation}
\frac{G^\prime(z)}{G(z)}=\psi(1-s),
\label{eq:G-log-derivative}
\end{equation}
where $\psi=\Gamma^\prime/\Gamma$ is the digamma function. In particular,
$$
\int_{-\infty}^{\infty}\log(\sigma+it)e^{(\sigma+it)^2}dt
=
-\frac{\sqrt{\pi}}{2}(\gamma+2\log2),
$$
where $\gamma$ is Euler's constant.
\end{corollary}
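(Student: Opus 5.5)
Everything follows from the closed form $G(z)=\pi/\Gamma(1-s)$ of Theorem \ref{thm:gaussian-pmt}, together with Proposition \ref{prop:pmt-entire} for the derivative formula.

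\textbf{Entireness and zeros.} $G$ is entire by Proposition \ref{prop:pmt-entire}, since $|e^{w^2}|=e^{\sigma^2-t^2}$ supplies the required Gaussian bound. Alternatively, $1/\Gamma$ is entire. The reciprocal Gamma function has simple zeros exactly at the nonpositive integers and no other zeros. Hence $G(z)=\pi/\Gamma(1-s)$ vanishes, simply, exactly when $1-s=-n$ with $n\in\mathbb{N}_0$. This means $s=1+n$, i.e.\ $z=s-\tfrac12=\tfrac12+n$.

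\textbf{Logarithmic derivative.} Away from these zeros, write $\log G(z)=\log\pi-\log\Gamma(1-s)$ locally. Since $ds/dz=1$, differentiating gives $G'(z)/G(z)=\psi(1-s)$. To avoid branch issues, I would instead differentiate $G(z)\Gamma(1-s)=\pi$ directly. This gives $G'\Gamma(1-s)-G\,\Gamma'(1-s)=0$, and dividing by $G\,\Gamma(1-s)$ yields \eqref{eq:G-log-derivative}.

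\textbf{The integral.} Take $m=1$ in \eqref{eq:pmt-derivatives} and evaluate at $z=0$:
$$G'(0)=\int_{-\infty}^{\infty}2\log(\sigma+it)\,e^{(\sigma+it)^2}dt.$$
At $z=0$ we have $s=\tfrac12$, so $G(0)=\pi/\Gamma(\tfrac12)=\sqrt\pi$ and $G'(0)=\sqrt\pi\,\psi(\tfrac12)$. The classical value is $\psi(\tfrac12)=-\gamma-2\log2$ (DLMF 5.4.13). Halving $G'(0)$ gives
$$\int_{-\infty}^{\infty}\log(\sigma+it)\,e^{(\sigma+it)^2}dt=-\tfrac{\sqrt\pi}{2}(\gamma+2\log 2).$$

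\textbf{Main obstacle.} No step is genuinely hard. The one point needing care is that the principal-branch $\log w$ appearing in \eqref{eq:pmt-derivatives} is the same branch used to define $w^{2z}$, so that the derivative formula applies verbatim. Proposition \ref{prop:pmt-entire} already guarantees this.
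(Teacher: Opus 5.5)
Your proposal is correct and follows the paper's proof essentially step for step: the zeros come from $G(z)=\pi/\Gamma(1-s)$ and the simple poles of $\Gamma$, the logarithmic derivative comes from differentiating this closed form, and the integral is the $m=1$ case of the derivative formula at $z=0$, using $G(0)=\sqrt{\pi}$ and $\psi(\tfrac12)=-\gamma-2\log 2$. Differentiating $G(z)\Gamma(1-s)=\pi$ instead of $\log G$ is a harmless variant that avoids branch choices, since that identity holds exactly away from the zeros of $G$.
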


\begin{proof}
The zeros follow from $G(z)=\pi/\Gamma(1-s)$ and the simple poles of $\Gamma$
at the nonpositive integers. Logarithmic differentiation gives
(\ref{eq:G-log-derivative}). On the other hand, differentiating
(\ref{eq:gaussian-pmt}) under the integral sign gives
$$
G^\prime(z)=
\int_{-\infty}^{\infty}
2\log(w)w^{2z}e^{w^2}dt.
$$
At $z=0$, $G(0)=\sqrt{\pi}$ and
$\psi(\tfrac{1}{2})=-\gamma-2\log2$, which gives the last expression.
\end{proof}

By Theorem \ref{thm:gaussian-pmt} and Euler's reflection formula we have $G(z)G(-z)=\pi\cos(\pi z)$ and taking logarithmic derivatives gives
$$
\frac{G^\prime(z)}{G(z)}
-
\frac{G^\prime(-z)}{G(-z)}
=
-\pi\tan(\pi z),
\qquad z\notin \tfrac{1}{2}+\mathbb{Z}.
$$
With $s=z+\tfrac{1}{2}$, this is the standard digamma reflection formula $\psi(1-s)-\psi(s)=\pi\cot(\pi s)$ written in the $z$-coordinate, see \citet[5.5.4]{DLMF}.

\section{Dirichlet composition and zeta-type functions}
\label{sec:dirichlet-zeta}

We next combine the Gaussian PMT with elementary exponential generating
functions. We continue to write $s=z+\tfrac{1}{2}$. For $\alpha>0$, define
$$
G(z,\alpha)
=\int_{-\infty}^{\infty}w^{2z}e^{\alpha w^2}dt,
\qquad w=\sigma+it.
$$
Thus $G(z)=G(z,1)$ and by Proposition \ref{prop:pmt-contour-deformation}, the right-hand side is
independent of $\sigma>0$.

\begin{proposition}[Scaling]
\label{prop:G-scaling}
For all $z\in\mathbb{C}$ and $\alpha>0$,
$$
G(z,\alpha)=\alpha^{-(z+1/2)}G(z)=\alpha^{-s}G(z).
$$
\end{proposition}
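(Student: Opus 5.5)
The plan is to rescale the contour variable so that the Gaussian kernel $e^{\alpha w^2}$ becomes $e^{v^2}$. Then I will use the $\sigma$-independence of $G$ (Proposition \ref{prop:pmt-contour-deformation}) to return to a standard vertical line.

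Fix $\alpha>0$ and $\sigma>0$, and write $w=\sigma+it$. Put $\beta=\sqrt{\alpha}>0$ and $v=\beta w=\beta\sigma+i\beta t$. Then $v$ runs over the vertical line $\operatorname{Re}(v)=\beta\sigma>0$, and $dt=\beta^{-1}d\tau$ with $\tau=\beta t$.

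The key branch check is that $w^{2z}=\beta^{-2z}v^{2z}$ with principal branches. Since $\beta>0$, multiplication by $\beta$ does not change $\arg$. Both $w$ and $v$ have argument in $(-\pi/2,\pi/2)$, and $\log v=\log\beta+\log w$ holds exactly. Hence $v^{2z}=\beta^{2z}w^{2z}$, and $\beta^{2z}=\alpha^{z}$ with the real positive branch.

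Substituting gives
$$
G(z,\alpha)=\int_{-\infty}^{\infty}\alpha^{-z}v^{2z}e^{v^2}\,\frac{d\tau}{\sqrt{\alpha}}
=\alpha^{-(z+1/2)}\,\mathcal{P}_{\beta\sigma}[e^u](z).
$$
Absolute convergence is guaranteed by Proposition \ref{prop:pmt-entire}, so the change of variables in a Lebesgue integral over $\mathbb{R}$ is immediate.

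Finally, $\mathcal{P}_{\beta\sigma}[e^u](z)=G(z)$ because $G$ is independent of the contour parameter. The hypothesis of Proposition \ref{prop:pmt-contour-deformation} holds for the entire kernel $e^u$: on any strip $\sigma_0\le\operatorname{Re}(w)\le\sigma_1$ we have $|e^{w^2}|\le e^{\sigma_1^2-\operatorname{Im}(w)^2}$, and $|w^{2z}|$ is polynomially bounded uniformly for $z$ in compact sets. The same fact gives independence of $\sigma$ on the left-hand side. This yields $G(z,\alpha)=\alpha^{-s}G(z)$ with $s=z+\tfrac12$.

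The only genuine subtlety is the branch identity $(\beta w)^{2z}=\beta^{2z}w^{2z}$. It works here precisely because $\alpha$ is real and positive. For complex $\alpha$ the rotation could push $\arg$ outside $(-\pi/2,\pi/2)$, and one would instead need an analytic-continuation argument in $\alpha$.
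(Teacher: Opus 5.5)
Your proposal is correct and follows the paper's proof: substitute $\tilde{w}=\sqrt{\alpha}\,w$, pull out $\alpha^{-(z+1/2)}$, and then invoke the $\sigma$-independence of the Gaussian transform from Proposition~\ref{prop:pmt-contour-deformation}. Your explicit verification of the branch identity $(\sqrt{\alpha}\,w)^{2z}=\alpha^{z}w^{2z}$ and of the hypotheses of that proposition simply fills in details the paper leaves implicit.
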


\begin{proof}
Set $\tilde{w}=\sqrt{\alpha}w$. Then
$\tilde{w}=\tilde{\sigma}+i\tilde{t}$, with
$\tilde{\sigma}=\sqrt{\alpha}\sigma$ and
$d\tilde{t}=\sqrt{\alpha}dt$. Hence
$w^{2z}=\alpha^{-z}\tilde{w}^{2z}$,
$e^{\alpha w^2}=e^{\tilde{w}^2}$, and
$dt=\alpha^{-1/2}d\tilde{t}$.
Therefore
$$
G(z,\alpha)
=
\alpha^{-(z+1/2)}
\int_{-\infty}^{\infty}
\tilde{w}^{2z}e^{\tilde{w}^2}d\tilde{t}.
$$
Since $\tilde{\sigma}>0$ and the Gaussian transform is independent of the
vertical-line parameter, the last integral equals $G(z)$. This completes the proof.
\end{proof}

Taking $\alpha=n+a$, where $a>0$ and $n\in\mathbb{N}_0$, gives
$G(z,n+a)=(n+a)^{-s}G(z)$. 
Equivalently,
$(n+a)^{-s} =
G(z,n+a)/G(z)$, away from the zeros of $G(z)$. At those zeros the quotient is
understood by removable continuation. Thus each Dirichlet factor $(n+a)^{-s}$
may be written as a ratio of two scaled Gaussian transforms.

For $a>0$, define
\begin{equation}
R(z,a)
=
\int_{-\infty}^{\infty}
w^{2z}\frac{e^{aw^2}}{1-e^{w^2}}dt,
\qquad 0<\sigma<\sqrt{\pi},
\label{eq:R-definition}
\end{equation}
and
\begin{equation}
D(z,a)
=
\int_{-\infty}^{\infty}
w^{2z}\frac{e^{aw^2}}{1+e^{w^2}}dt,
\qquad 0<\sigma<\sqrt{\pi/2}.
\label{eq:D-definition}
\end{equation}
The restrictions on $\sigma$ place the parabolic contour in the pole-free
homotopy class between the origin and the first nonzero poles of the
corresponding meromorphic kernels. For such $\sigma$, the integrals define
entire functions of $z$.

We write
$$
\eta(s,a)=\sum_{n=0}^{\infty}(-1)^n(n+a)^{-s},
$$
initially in its half-plane of convergence and then by analytic continuation.
For $a=1$, this is the classical Dirichlet eta function.

\begin{theorem}[Parabolic zeta representations]
\label{thm:parabolic-zeta-representations}
For $a>0$ and $s=z+\tfrac{1}{2}$,
\begin{equation}
R(z,a)=G(z)\zeta(s,a),
\qquad
D(z,a)=G(z)\eta(s,a),
\label{eq:pmt-zeta-factorization}
\end{equation}
where the identities are understood meromorphically in $s$. Equivalently,
away from the zeros of $G(z)$,
$$
\zeta(s,a)=\frac{R(z,a)}{G(z)},
\qquad
\eta(s,a)=\frac{D(z,a)}{G(z)}.
$$
At zeros of $G(z)$, the quotients are interpreted by removable continuation
except where the corresponding zeta-type function has a genuine pole.
\end{theorem}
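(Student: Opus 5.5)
The plan is to prove the identities first in the half-plane $\operatorname{Re}(s)>1$ (i.e. $\operatorname{Re}(z)>\tfrac12$) by Dirichlet composition, and then extend them to all of $\mathbb{C}$ by analytic continuation. The left-hand sides $R(z,a)$ and $D(z,a)$ are entire in $z$ by Proposition \ref{prop:pmt-entire}. Indeed, for $0<\sigma<\sqrt{\pi}$ (resp. $\sqrt{\pi/2}$) the kernel is holomorphic near $C_\sigma$. Moreover, $|e^{aw^2}/(1\mp e^{w^2})|\le A e^{a(\sigma^2-t^2)}$ for large $|t|$, since $|e^{w^2}|=e^{\sigma^2-t^2}\to0$, so the Gaussian bound holds with $\eta=a$. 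The right-hand sides are meromorphic in $s$, so agreement on an open set suffices.

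\textbf{Step 1: expanding the kernel.} On $C_\sigma$ we have $|e^{w^2}|=e^{\sigma^2-t^2}$, which exceeds $1$ for $|t|<\sigma$. So the geometric series $1/(1-e^{u})=\sum_{n\ge0}e^{nu}$ does not converge on the whole contour. I would handle this by deforming the contour. The poles of $e^{au}/(1-e^u)$ are at $u=2\pi i k$, i.e. $w^2=2\pi i k$. For $k\ne0$ these give $\operatorname{Re}(w)=\sqrt{\pi|k|}\ge\sqrt\pi$, which is exactly why $\sigma<\sqrt\pi$ is imposed. The pole at $u=0$ sits at the branch point.

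Rather than passing through the pole, I would use the other expansion $1/(1-e^u)=-\sum_{n\ge1}e^{-nu}$, valid where $\operatorname{Re}u>0$. This is the wrong direction, so the cleaner alternative is to work on the portion $\operatorname{Re}u<0$ with the standard expansion. My first choice is therefore to let $\sigma\to0^+$, which is legitimate because the transform is $\sigma$-independent within the pole-free class (Proposition \ref{prop:pmt-contour-deformation} with the stated uniform bounds). In the limit, $C_\sigma$ collapses onto the two banks of the cut $(-\infty,0]$, where $\operatorname{Re}u\le0$ and the series converges. Near $u=0$ the integrand behaves like $u^{z-1/2}\cdot(-1/u)$, so for $\operatorname{Re}(z)>\tfrac12$ the contribution of a small arc around $0$ vanishes. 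The limit is then the Hankel keyhole integral with no circle term.

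\textbf{Step 2: termwise integration.} On the banks, $e^{(n+a)u}$ decays exponentially in $|u|$. Writing the banks back in $w$-coordinates, with $w=it$ and $t\neq0$, the sum $\sum_n |w^{2z}e^{(n+a)w^2}|$ is dominated by $|t|^{2\operatorname{Re} z}e^{-at^2}/(1-e^{-t^2})$. This is integrable near $t=0$ precisely when $\operatorname{Re}z>\tfrac12$. Dominated convergence then justifies the interchange. Each term is the $\sigma\to0$ limit of $G(z,n+a)$, which equals $(n+a)^{-s}G(z)$ by Proposition \ref{prop:G-scaling}; the same $\sigma$-independence and integrability near $0$ let us identify the limit with $G(z,n+a)$. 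Summing gives
\[
R(z,a)=G(z)\sum_{n\ge0}(n+a)^{-s}=G(z)\zeta(s,a).
\]
The alternating case is identical, using $1/(1+e^u)=\sum_{n\ge0}(-1)^ne^{nu}$ and poles at $u=(2k+1)\pi i$, i.e. $\operatorname{Re}w\ge\sqrt{\pi/2}$, giving $D=G\,\eta$.

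\textbf{Step 3: continuation.} Both sides are meromorphic in $z$ and agree for $\operatorname{Re}z>\tfrac12$, so the identity theorem extends them everywhere. The quotient statements follow immediately. Zeros of $G$ at $s=1,2,\dots$ cancel against the pole of $\zeta(s,a)$ at $s=1$, since $R$ is entire. Elsewhere the quotients are removable.

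The main obstacle is Step 1: justifying the collapse $\sigma\to0$, and in particular controlling the integrand near the branch point $u=0$, where the kernel has a simple pole. The fix is to restrict to $\operatorname{Re}z>\tfrac12$ and to obtain bounds uniform in $\sigma\in(0,\sigma_1]$ for the dominated convergence. If this proves delicate, I would instead split the $t$-integral at $|t|=\sigma$ and expand differently on the bounded piece, but I expect the keyhole limit to work.
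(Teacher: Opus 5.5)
Your proposal is correct. It shares the paper's overall structure (prove the identity for $\operatorname{Re}(s)>1$, then extend by the identity theorem), but it evaluates the collapsed contour integral differently.

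The paper deforms $C_\sigma$ in the $u$-plane to a Hankel contour and disposes of the small circle using $u^{s-1}e^{au}/(1-e^u)=O(u^{s-2})$. It then reads off the jump $\sin(\pi s)\int_0^\infty x^{s-1}e^{-ax}(1-e^{-x})^{-1}\,dx$ and cites the classical Mellin formula, which gives $\Gamma(s)\zeta(s,a)$. Finally it invokes Theorem~\ref{thm:gaussian-pmt} to recognize $\Gamma(s)\sin(\pi s)$ as $G(z)$.

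Your $\sigma\to0^+$ limit in the $w$-plane arrives at the same integral, because $\int_{-\infty}^{\infty}(it)^{2z}F(t^2)\,dt=\cos(\pi z)\int_0^\infty x^{s-1}F(x)\,dx$ and $\cos(\pi z)=\sin(\pi s)$. In your version the small-circle estimate is replaced by integrability of $|t|^{2\operatorname{Re}(z)-2}$ at $t=0$. The step you flagged as the main obstacle does go through: the uniform majorant exists because $|1-e^{w^2}|\ge c\min(1,|w|^2)$ on $0\le\operatorname{Re}(w)\le\sigma_1<\sqrt{\pi}$ and $|w|\ge|t|$. The $D$ kernel has no singularity at $w=0$, so that case is easier.

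Where the paper cites the Mellin representation, you expand geometrically and apply Proposition~\ref{prop:G-scaling} term by term. This makes the Dirichlet series and the common factor $G(z)$ appear directly. In effect, your Step 2 re-proves the Mellin representation of $\zeta(s,a)$ in PMT notation.

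Your route keeps the argument inside the PMT framework. It uses exactly the ``Dirichlet composition'' mechanism the section advertises, writing each $(n+a)^{-s}$ as $G(z,n+a)/G(z)$, which the paper's proof never actually uses. It also yields the factorization $R=G\zeta$ without the closed form of $G$: Theorem~\ref{thm:gaussian-pmt} enters only when you locate the zeros of $G$ for the quotient statements. The paper's route is shorter because it hands off both the termwise interchange and the evaluation to classical results.
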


\begin{proof}
We first prove the identities for $\operatorname{Re}(s)>1$. By
(\ref{eq:pmt-contour-form}),
$$
R(z,a)
=
\frac{1}{2i}
\int_{C_\sigma}
u^{s-1}\frac{e^{au}}{1-e^u}du,
\qquad s=z+\tfrac{1}{2}.
$$
Deform $C_\sigma$ to the standard Hankel contour $H$ about $(-\infty,0]$, without crossing the nonzero poles of $(1-e^u)^{-1}$. This is the classical Hankel-type contour used in the contour representation of the Hurwitz zeta function; see, for example, \citet[Section 13.13, pp. 277--278]{WhittakerWatson:2021}.
For $\operatorname{Re}(s)>1$, the contribution from the small circle around the origin vanishes, since
$u^{s-1}e^{au}/(1-e^u) = O(u^{s-2})$, as $u\to0$. Thus
$$
R(z,a)
= \frac{1}{2i}
\int_H
u^{s-1}\frac{e^{au}}{1-e^u}du.
$$
Evaluating the jump across the negative real axis gives
$$
R(z,a) = \sin(\pi s) \int_0^\infty x^{s-1}\frac{e^{-ax}}{1-e^{-x}}dx.
$$
For $\operatorname{Re}(s)>1$, the classical Mellin representation of the
Hurwitz zeta function gives
$$
\int_0^\infty
x^{s-1}\frac{e^{-ax}}{1-e^{-x}}dx
=
\Gamma(s)\zeta(s,a);
$$
see, for example, \citet[Section 12.1]{Apostol:1976}. Therefore
$R(z,a)=\Gamma(s)\sin(\pi s)\zeta(s,a)
=G(z)\zeta(s,a)$,  where the last equality follows from Theorem \ref{thm:gaussian-pmt}.

The proof for $D$ is analogous. By (\ref{eq:pmt-contour-form}),
$$
D(z,a)
=
\frac{1}{2i}
\int_{C_\sigma}
u^{s-1}\frac{e^{au}}{1+e^u}du.
$$
Deforming $C_\sigma$ to the standard Hankel contour $H$ without crossing poles
gives
$$
D(z,a)
=
\frac{1}{2i}
\int_H
u^{s-1}\frac{e^{au}}{1+e^u}du.
$$
The jump across the cut gives
$$
D(z,a)
=
\sin(\pi s)
\int_0^\infty
x^{s-1}\frac{e^{-ax}}{1+e^{-x}}dx.
$$
For $\operatorname{Re}(s)>1$,
$\int_0^\infty
x^{s-1}\frac{e^{-ax}}{1+e^{-x}}dx
=\Gamma(s)\eta(s,a)$. 
Hence $D(z,a)=G(z)\eta(s,a)$ on the open half-plane $\operatorname{Re}(s)>1$.

Both sides of the identities
$R(z,a)=G(z)\zeta(s,a)$ and $D(z,a)=G(z)\eta(s,a)$ are meromorphic functions of $s$, equivalently of $z=s-\tfrac{1}{2}$. 
In fact, the apparent pole of $G(z)\zeta(s,a)$ at $s=1$ is removable, because $G(z)=\Gamma(s)\sin(\pi s)$ has a simple zero there. Since the identities agree on the open half-plane $\operatorname{Re}(s)>1$, they extend by the identity theorem for meromorphic functions. The quotient forms follow wherever
$G(z)\neq0$, with removable continuation at zeros of $G(z)$ except where the
corresponding zeta-type function has a genuine pole.
\end{proof}

For $a=1$, set $R(z)=R(z,1)$ and $D(z)=D(z,1)$. Then
$$
R(z)=G(z)\zeta(s),
\qquad
D(z)=G(z)\eta(s),
\qquad s=z+\tfrac{1}{2}.
$$
These are the PMT representations of the Riemann zeta function and the
Dirichlet eta function.

The formulas above are structurally equivalent to the classical Hankel-contour
representations of the Hurwitz zeta function. Indeed, the contour identity in
the proof, together with $G(z)=\Gamma(s)\sin(\pi s)$, gives
$$
\zeta(s,a)
=
\frac{\Gamma(1-s)}{2\pi i}
\int_{C_\sigma}
u^{s-1}\frac{e^{au}}{1-e^u}du,
$$
where the equality is understood by meromorphic continuation. Similarly,
\begin{equation}
\eta(s,a)
=
\frac{\Gamma(1-s)}{2\pi i}
\int_{C_\sigma}
u^{s-1}\frac{e^{au}}{1+e^u}du.
\label{eq:eta-parabolic-hankel-form}
\end{equation}
Thus the PMT does not provide a fundamentally new analytic continuation of
$\zeta(s,a)$. Rather, after the substitution $u=w^2$, it gives a parabolic
parametrization of the corresponding Hankel-type integrals. The new feature is
the explicit vertical-line representation in the $w$-plane, where
$\operatorname{Re}(w^2)=\sigma^2-t^2$ supplies Gaussian endpoint decay. This
places the Gamma, Hurwitz zeta, Riemann zeta, and Dirichlet eta functions in a
common transform notation, which is the basis for the dictionary developed
below.

\section{Critical-strip reformulations}
\label{sec:critical-strip}

The identities in Section \ref{sec:dirichlet-zeta} translate standard questions
about $\zeta(s)$ into equivalent statements about the parabolic-contour
integrals $R(z)$ and $D(z)$. We use the standard formulations of the Riemann
hypothesis and the Lindel\"of hypothesis; see, for example,
\citet{Titchmarsh:1986}. These reformulations do not, by themselves, give
new zero-free regions or growth estimates. Their role is to express the
critical-strip problem in the PMT coordinates $z=s-\tfrac{1}{2}$, where the
critical line becomes the imaginary axis.

\subsection{Zero correspondence}
\label{subsec:zero-correspondence}

Recall that $R(z)=G(z)\zeta(z+\tfrac{1}{2})$.
By Corollary \ref{cor:G-consequences}, the zeros of $G$ are
$$
z=\tfrac{1}{2}+n,\qquad n\in\mathbb{N}_{0}.
$$
Thus $G$ has no zeros in the open strip $|\operatorname{Re}(z)|<\tfrac{1}{2}$.

\begin{corollary}[Zero correspondence]
\label{cor:R-zero-correspondence}
For $|\operatorname{Re}(z)|<\tfrac{1}{2}$,
\begin{equation}
\zeta(z+\tfrac{1}{2})=0
\quad\Leftrightarrow\quad
R(z)=0.
\label{eq:R-zeta-zero-correspondence}
\end{equation}
The correspondence preserves multiplicities.
\end{corollary}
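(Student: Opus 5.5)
The plan is to read the claim off the factorization $R(z)=G(z)\zeta(z+\tfrac{1}{2})$ from Theorem \ref{thm:parabolic-zeta-representations} with $a=1$, together with the zero set of $G$ from Corollary \ref{cor:G-consequences}.

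First I check that the factorization holds pointwise as an identity of holomorphic functions on the open strip $S=\{|\operatorname{Re}(z)|<\tfrac{1}{2}\}$. The left side $R$ is entire by Proposition \ref{prop:pmt-entire}; the Gaussian decay bound holds because $|1-e^{w^2}|\to1$ as $|t|\to\infty$. On the right, $G$ is entire. The function $\zeta(s)$ is holomorphic for $s\neq1$, and $s=1$ corresponds to $z=\tfrac{1}{2}$, which lies outside $S$. So both sides are holomorphic on $S$, and the meromorphic identity of Theorem \ref{thm:parabolic-zeta-representations} becomes an identity of holomorphic functions there.

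Next, by Corollary \ref{cor:G-consequences} the zeros of $G$ are exactly $z=\tfrac{1}{2}+n$ with $n\in\mathbb{N}_0$. None of these lie in $S$, so $G$ is holomorphic and nonvanishing on $S$. The equivalence $\zeta(z+\tfrac{1}{2})=0\Leftrightarrow R(z)=0$ then follows immediately from the factorization.

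For multiplicities, fix a zero $z_0\in S$ and write $\zeta(z+\tfrac{1}{2})=(z-z_0)^m h(z)$ with $h$ holomorphic near $z_0$ and $h(z_0)\neq0$. Then $R(z)=(z-z_0)^m\,G(z)h(z)$, and $G(z_0)h(z_0)\neq0$, so $R$ has a zero of the same order $m$ at $z_0$. The translation $s=z+\tfrac{1}{2}$ does not change orders.

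I expect no step to be a real obstacle. The only point needing care is the first: confirming that the "meromorphic" identity involves no cancelled pole inside $S$. This holds because the only pole of $\zeta$ sits at the boundary point $z=\tfrac{1}{2}$.
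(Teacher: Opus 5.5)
Your proposal is correct and takes the same route as the paper. The paper simply observes that $G$ has no zeros on $|\operatorname{Re}(z)|<\tfrac{1}{2}$ and reads the zero correspondence, with multiplicities, off the factorization $R(z)=G(z)\zeta(z+\tfrac{1}{2})$. Your extra checks (that the only pole of $\zeta$ sits at the boundary point $z=\tfrac{1}{2}$, and the local factorization $(z-z_0)^m G(z)h(z)$) just make explicit what the paper leaves implicit; one small addition is that the Gaussian bound for Proposition~\ref{prop:pmt-entire} also needs $1-e^{w^2}\neq 0$ on the whole contour, which is exactly what $0<\sigma<\sqrt{\pi}$ guarantees.
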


\begin{proof}
In the strip $|\operatorname{Re}(z)|<\tfrac{1}{2}$, the factor $G(z)$ is nonzero. Hence the zeros of $R(z)$ in this strip
are exactly the zeros of $\zeta(z+\tfrac{1}{2})$, with the same multiplicities.
\end{proof}

It follows immediately that the Riemann hypothesis is equivalent to the assertion that every zero of $R(z)$ in the open strip $|\operatorname{Re}(z)|<\tfrac{1}{2}$ lies on the imaginary axis. This is only the usual critical-strip formulation written in the shifted coordinate $z=s-\tfrac{1}{2}$; its advantage is that the critical strip is centered at zero and the critical line becomes $\operatorname{Re}(z)=0$.

\subsection{Lindel\"of reformulation}
\label{subsec:lindelof-reformulation}

On the critical line, Theorem \ref{thm:parabolic-zeta-representations} gives
$\zeta(\tfrac{1}{2}+i\tau)=\frac{R(i\tau)}{G(i\tau)}$,
 $\tau\in\mathbb{R}$, where the denominator is explicit, by Theorem \ref{thm:gaussian-pmt},
$
G(i\tau)=\cosh(\pi\tau)\Gamma(\tfrac{1}{2}+i\tau)$. 
Using the standard identity $|\Gamma(\tfrac{1}{2}+i\tau)|^2=\pi/\cosh(\pi\tau)$
\cite[5.4.4]{DLMF}, it follows that
$$
|G(i\tau)|=\sqrt{\pi\cosh(\pi\tau)}
\sim \sqrt{\frac{\pi}{2}}e^{\pi|\tau|/2},
\qquad\text{as}\quad |\tau|\to\infty.
$$

\begin{corollary}[Lindel\"of in PMT form]
\label{cor:lindelof-pmt}
The Lindel\"of hypothesis is equivalent to the assertion that, for every
$\epsilon>0$,
$$
R(i\tau)
=
O_\epsilon\left(e^{\pi|\tau|/2}(1+|\tau|)^\epsilon\right),
\qquad |\tau|\to\infty.
$$
Equivalently, for any admissible $0<\sigma<\sqrt{\pi}$,
$$
\left|
\int_{-\infty}^{\infty}
(\sigma+it)^{2i\tau}
\frac{1}{e^{-(\sigma+it)^2}-1}dt
\right|
=
O_\epsilon\left(e^{\pi|\tau|/2}(1+|\tau|)^\epsilon\right).
$$
\end{corollary}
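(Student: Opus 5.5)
The plan is to use the factorization $R(i\tau)=G(i\tau)\zeta(\tfrac12+i\tau)$ from Theorem \ref{thm:parabolic-zeta-representations} (with $a=1$) together with the explicit modulus $|G(i\tau)|=\sqrt{\pi\cosh(\pi\tau)}$ recorded just before the statement. Recall that the Lindel\"of hypothesis, in its standard critical-line form (Titchmarsh), asserts $\zeta(\tfrac12+i\tau)=O_\epsilon((1+|\tau|)^\epsilon)$ for every $\epsilon>0$. Since $G(i\tau)\neq0$ for all real $\tau$ (the zeros of $G$ lie at $z=\tfrac12+n$ by Corollary \ref{cor:G-consequences}), we have the exact identity $|\zeta(\tfrac12+i\tau)|=|R(i\tau)|/\sqrt{\pi\cosh(\pi\tau)}$ for all $\tau\in\mathbb{R}$.

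The key step is the two-sided comparison $\sqrt{\pi/2}\,e^{\pi|\tau|/2}\le\sqrt{\pi\cosh(\pi\tau)}\le\sqrt{\pi}\,e^{\pi|\tau|/2}$. This follows from $e^{\pi|\tau|}/2\le\cosh(\pi\tau)\le e^{\pi|\tau|}$. Multiplying or dividing by this factor converts a bound $O_\epsilon((1+|\tau|)^\epsilon)$ on $\zeta$ into $O_\epsilon(e^{\pi|\tau|/2}(1+|\tau|)^\epsilon)$ on $R(i\tau)$, and conversely, with constants changed only by absolute factors. This gives both directions of the first equivalence.

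For the second formulation, I would rewrite the kernel. Since $\frac{e^{u}}{1-e^{u}}=\frac{1}{e^{-u}-1}$, the integral displayed in the statement, with $u=w^2$ and $w=\sigma+it$, is exactly $R(i\tau)=R(i\tau,1)$ as defined in \eqref{eq:R-definition}. For any $0<\sigma<\sqrt{\pi}$, this integral is the same entire function: the parabola $C_\sigma$ stays in the pole-free class between $0$ and the poles $\pm2\pi i$, whose $w$-preimages have real part $\sqrt{\pi}$. Proposition \ref{prop:pmt-contour-deformation} applies because the kernel decays like $e^{-t^2}$ uniformly on strips $\sigma_0\le\operatorname{Re}(w)\le\sigma_1<\sqrt{\pi}$.

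The only point needing care, and the main obstacle, is this $\sigma$-independence and the legitimacy of evaluating the factorization at $z=i\tau$. The factorization, however, is an identity of entire/meromorphic functions already established in Theorem \ref{thm:parabolic-zeta-representations}, and $s=\tfrac12+i\tau$ avoids the pole $s=1$. Hence the proof reduces to the elementary cosh bounds.
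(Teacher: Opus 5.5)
Your proposal is correct and follows the paper's proof. You divide the factorization $R(i\tau)=G(i\tau)\zeta(\tfrac12+i\tau)$ by $|G(i\tau)|=\sqrt{\pi\cosh(\pi\tau)}\asymp e^{\pi|\tau|/2}$ and rewrite the kernel as $1/(e^{-w^2}-1)$; your explicit $\cosh$ bounds and the $\sigma$-independence check via Proposition~\ref{prop:pmt-contour-deformation} just make precise what the paper leaves implicit in its $\asymp$ and in the admissible range $0<\sigma<\sqrt{\pi}$.
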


\begin{proof}
The identities above imply
$$
|\zeta(\tfrac{1}{2}+i\tau)|
=
\frac{|R(i\tau)|}{|G(i\tau)|}
\asymp
e^{-\pi|\tau|/2}|R(i\tau)|.
$$
Thus $|\zeta(\tfrac{1}{2}+i\tau)|=O_\epsilon((1+|\tau|)^\epsilon)$
for every $\epsilon>0$ if and only if the stated growth condition for
$R(i\tau)$ holds. The integral form follows from
$$
R(i\tau)
=
\int_{-\infty}^{\infty}
w^{2i\tau}\frac{e^{w^2}}{1-e^{w^2}}dt
=
\int_{-\infty}^{\infty}
w^{2i\tau}\frac{1}{e^{-w^2}-1}dt,
\qquad w=\sigma+it.
$$
\end{proof}
This reformulation isolates the explicit exponential factor in $G(i\tau)$, but
it does not remove the oscillatory difficulty of estimating the remaining
parabolic integral $R(i\tau)$.

The same factorization also has a boundary interpretation. The parabolic
integral $R(z)$ is entire and remains well defined on the boundary lines
$|\operatorname{Re}(z)|=\tfrac{1}{2}$. On the right boundary, for
$\tau\neq0$,
$$
\zeta(1+i\tau)=\frac{R(\tfrac{1}{2}+i\tau)}
{G(\tfrac{1}{2}+i\tau)}.
$$
At $\tau=0$, the zero $G(\tfrac{1}{2})=0$ accounts for the pole of $\zeta$ at
$s=1$; indeed $R(\tfrac{1}{2})=-\pi$. Thus the PMT factorization extends to
the boundary of the critical strip, with the singularity at $s=1$ isolated
entirely in the explicit factor $G$.
\medskip

\begin{table}[hbt]
\centering
\small
\renewcommand{\arraystretch}{1.35}
\begin{tabularx}{\textwidth}{@{}p{3.4cm}p{1.9cm}p{3.9cm}X@{}}
\toprule
\textbf{Kernel} & $f(u)$ & $\mathcal{P}_{\sigma}[f](z)$ & \textbf{Derivation / status} \\
\midrule

Gaussian
& $e^u$
& $G(z)$
& Definition; Theorem \ref{thm:gaussian-pmt}. \\[2mm]

Scaled Gaussian
& $e^{\alpha u}$
& $\alpha^{-s}G(z)$
& Scaling, $\alpha>0$; Proposition \ref{prop:G-scaling}. \\[3mm]

Monomial Gaussian
& $u^k e^u$
& $G(z+k)$
& Monomial shift, $k\in\mathbb{N}_0$. \\[2mm]

Log-Gaussian
& $e^u\log u$
& $G^{\prime}(z)$
& Differentiation in $z$. \\[2mm]

Hurwitz
& $\displaystyle\frac{e^{au}}{1-e^u}$
& $R(z,a)=G(z)\zeta(s,a)$
& Hankel deformation; $a>0$, $0<\sigma<\sqrt{\pi}$. \\[3mm]

Riemann
& $\displaystyle\frac{e^u}{1-e^u}$
& $R(z)=G(z)\zeta(s)$
& Hurwitz row with $a=1$. \\[3mm]

Alternating Hurwitz
& $\displaystyle\frac{e^{au}}{1+e^u}$
& $D(z,a)=G(z)\eta(s,a)$
& Hankel deformation; $a>0$, $0<\sigma<\sqrt{\pi/2}$. \\[3mm]

Dirichlet eta
& $\displaystyle\frac{e^u}{1+e^u}$
& $D(z)=G(z)\eta(s)$
& Alternating Hurwitz row with $a=1$. \\

\bottomrule
\end{tabularx}
\caption{Dictionary of the main PMT identities. The quotient identities for
zeta-type functions are interpreted meromorphically; apparent singularities at
zeros of $G$ are removable except where the zeta-type function has a genuine
pole.}
\label{tab:pmt-dictionary}
\end{table}

\section{A PMT dictionary}
\label{sec:pmt-dictionary}

We collect the main transforms derived above. Table~\ref{tab:pmt-dictionary} is intended as a dictionary of proved identities, not as an independent extension principle.
The elementary operations used in the derivations are linearity, monomial shifts, differentiation in $z$, scaling in the Gaussian kernel, and contour deformation to Hankel-type contours. In particular, the monomial Gaussian row
follows from $\mathcal{P}_\sigma[u^k e^u](z)=G(z+k)$, while the logarithmic Gaussian row follows from $\mathcal{P}_\sigma[e^u\log u](z)=G^\prime(z)$, using
$\log(w^2)=2\log w$ on the chosen branch and differentiation under the integral sign. Scaling and deformation identities are to be interpreted with the same contour restrictions as in Proposition \ref{prop:pmt-contour-deformation}.
Throughout the table, $s=z+\tfrac{1}{2}$. The restrictions on $\sigma$ specify the pole-free contour class.

\section{Conclusion}
\label{sec:conclusion}

We introduced the Parabolic Mellin Transform as a Mellin-type contour transform obtained from the quadratic parametrization $u=w^2$. The main analytic feature of this parametrization is that exponential kernels on the parabolic contour become Gaussian-damped vertical-line integrals. This gives a compact contour-theoretic route to the Gaussian transform $G$, which represents both the Gamma function and its reciprocal at reflected arguments.

The same Gaussian transform provides a common factor in the parabolic representations of Hurwitz zeta, Riemann zeta, and Dirichlet eta functions. The scaling identity for $G(z,\alpha)$ turns Gaussian transforms into Dirichlet factors, while Hankel deformation gives the corresponding zeta-type identities. In this sense, the PMT is best viewed as a transform dictionary: it reorganizes classical Hankel-contour formulas in a vertical-line parametrization with explicit Gaussian endpoint decay.

The critical-strip consequences are therefore reformulations rather than new zero-free regions or growth estimates. The factorization $R(z)=G(z)\zeta(z+\tfrac{1}{2})$ gives an exact zero correspondence in the open strip $|\operatorname{Re}(z)|<\tfrac{1}{2}$, and the explicit size of $G(i\tau)$ gives an equivalent PMT form of the Lindel\"of hypothesis. These identities suggest that parabolic-contour representations may be a useful way to compare Gamma-type normalizations, zeta-type kernels, and critical-line growth within a single analytic notation.

\section*{Acknowledgments}
We thank Christian Berg (University of Copenhagen), Steen Thorbj{\o}rnsen (Aarhus University), and Boris Hanin (Princeton University) for their helpful comments and suggestions.

\section*{Funding}
Chen Tong acknowledges financial support from the National Natural Science Foundation of China (72301227) and the Fujian Provincial Natural Science Foundation of China (2025J08008).

{\footnotesize\bibliographystyle{apalike}
\bibliography{prh}
}{\footnotesize\par}

\newpage

\appendix
\setcounter{section}{0}\renewcommand{\thesection}{S.\arabic{section}}
\setcounter{equation}{0}\renewcommand{\theequation}{S.\arabic{equation}}
\setcounter{table}{0}\renewcommand{\thetable}{S.\arabic{table}}
\setcounter{lemma}{0}\renewcommand{\thelemma}{S.\arabic{lemma}}

\part*{Supplement}

\section{An auxiliary sufficient condition}
\label{subsec:symmetric-sufficient-condition}

The zero correspondence in Corollary \ref{cor:R-zero-correspondence} can be
embedded in a symmetric auxiliary function. Define
$$
\mathcal{S}(z)
=
\frac{1}{2}\{R(z)-R(-z)+D(z)-D(-z)\}.
$$
If $R(z)=0$ and $|\operatorname{Re}(z)|<\tfrac{1}{2}$, then
$\zeta(z+\tfrac{1}{2})=0$. By the functional equation,
$\zeta(\tfrac{1}{2}-z)=0$, and hence $R(-z)=0$. Since
$D(z)=G(z)\eta(z+\tfrac{1}{2})$ and
$\eta(s)=(1-2^{1-s})\zeta(s)$, it follows that $D(z)=D(-z)=0$ as well.
Thus every zero of $R$ in the open critical strip is also a zero of
$\mathcal{S}$.

Let $u_t=(\sigma+it)^2$ with $0<\sigma<\sqrt{\pi/2}$ and define
\begin{equation}
\mathcal{X}(\tau)
=
\int_{-\infty}^{\infty}
\frac{\sin(\tau\log u_t)}{\sinh(u_t)}dt.
\label{eq:X-definition}
\end{equation}
The restriction on $\sigma$ keeps the contour in the pole-free homotopy class
for both $R$ and $D$.

For fixed $\tau$ with $|\operatorname{Im}(\tau)|<\tfrac{1}{2}$, the integral
defining $\mathcal{X}(\tau)$ converges absolutely. Indeed,
$\log u_t=O(\log|t|)+iO(1)$ as $|t|\to\infty$, so
$\sin(\tau\log u_t)$ grows at most polynomially in $|t|$, while
$1/\sinh(u_t)=O(e^{-t^2})$ along $u_t=(\sigma+it)^2$.

\begin{proposition}[Auxiliary sufficient condition for RH]
\label{prop:X-sufficient-RH}
If every zero of $\mathcal{X}(\tau)$ in the strip
$|\operatorname{Im}(\tau)|<\tfrac{1}{2}$ is real, then the Riemann hypothesis
holds.
\end{proposition}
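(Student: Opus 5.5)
The plan is to identify $\mathcal{X}$ with the auxiliary function $\mathcal{S}$ on the imaginary axis, in the form $\mathcal{S}(i\tau)=-i\,\mathcal{X}(\tau)$. The argument preceding the proposition already shows that every zero of $R$ in the open critical strip is a zero of $\mathcal{S}$. A zero of $\zeta$ off the critical line would therefore produce a non-real zero of $\mathcal{X}$ in the strip $|\operatorname{Im}(\tau)|<\tfrac12$.

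\emph{Step 1 (kernel identity).} Fix $0<\sigma<\sqrt{\pi/2}$. This range is admissible for both (\ref{eq:R-definition}) and (\ref{eq:D-definition}), so $R(z)$ and $D(z)$ are given by integrals over the same line $w=\sigma+it$. Adding the kernels gives
$$
\frac{e^u}{1-e^u}+\frac{e^u}{1+e^u}=\frac{2e^u}{1-e^{2u}}=-\frac{1}{\sinh u}.
$$
Hence $R(z)+D(z)=-\int_{-\infty}^{\infty}u_t^{z}\,\sinh(u_t)^{-1}\,dt$, where $u_t^{z}=w^{2z}$. This uses the branch convention of Section~\ref{sec:PMT}: with $\arg w\in(-\pi/2,\pi/2)$ we have $\arg u_t=2\arg w\in(-\pi,\pi)$, so $w^{2z}=\exp(z\log u_t)$ with the principal logarithm.

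\emph{Step 2 (evaluate $\mathcal{S}$).} Subtracting the same expression at $-z$ gives
$$
\mathcal{S}(z)=-\tfrac12\int_{-\infty}^{\infty}\frac{e^{z\log u_t}-e^{-z\log u_t}}{\sinh u_t}\,dt.
$$
Putting $z=i\tau$, the numerator becomes $2i\sin(\tau\log u_t)$, so $\mathcal{S}(i\tau)=-i\,\mathcal{X}(\tau)$. The condition $|\operatorname{Re}(z)|<\tfrac12$ corresponds exactly to $|\operatorname{Im}(\tau)|<\tfrac12$, since $\operatorname{Im}(\tau)=-\operatorname{Re}(z)$. All integrals converge absolutely there by the remark preceding the proposition, or directly by Proposition~\ref{prop:pmt-entire}, because $1/\sinh(u_t)$ has Gaussian decay along the line. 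So the splitting and recombining of integrals is legitimate.

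\emph{Step 3 (conclude).} Let $\rho$ be a nontrivial zero of $\zeta$. Classically $0<\operatorname{Re}(\rho)<1$, so $z=\rho-\tfrac12$ satisfies $|\operatorname{Re}(z)|<\tfrac12$. Corollary~\ref{cor:R-zero-correspondence} gives $R(z)=0$, and the argument before the proposition (functional equation together with $\eta(s)=(1-2^{1-s})\zeta(s)$) gives $\mathcal{S}(z)=0$. Set $\tau=-iz$; then $|\operatorname{Im}(\tau)|<\tfrac12$ and $\mathcal{X}(\tau)=i\,\mathcal{S}(z)=0$. By hypothesis $\tau$ is real, so $\operatorname{Re}(z)=0$, that is, $\operatorname{Re}(\rho)=\tfrac12$.

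The main point needing care is Step~1. The two integrals must be taken on a common contour in the pole-free class for both kernels, which the restriction $\sigma<\sqrt{\pi/2}$ ensures. The branch of $w^{2z}$ must agree with $\exp(z\log u_t)$ for the principal $\log$ appearing in (\ref{eq:X-definition}), which the convention $\arg w\in(-\pi/2,\pi/2)$ ensures. Once these are checked, the remaining steps are bookkeeping.
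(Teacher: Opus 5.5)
Your proposal is correct and matches the paper's proof step for step. You use the same kernel identity to obtain $\mathcal{S}(z)=-\tfrac12\int_{-\infty}^{\infty}(u_t^{z}-u_t^{-z})/\sinh(u_t)\,dt$, the same evaluation $\mathcal{S}(i\tau)=-i\mathcal{X}(\tau)$, and the same substitution $\tau=-iz$ to turn an off-line zero of $\zeta$ into a non-real zero of $\mathcal{X}$ in the strip. Your added checks (a common admissible $\sigma<\sqrt{\pi/2}$ for $R$ and $D$, and $w^{2z}=\exp(z\log u_t)$ with the principal logarithm) make explicit what the paper leaves implicit.
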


\begin{proof}
Using
$$
\frac{1}{e^{-u}-1}+\frac{1}{e^{-u}+1}
=
-\frac{1}{\sinh(u)},
$$
we obtain, with $u=u_t$,
$$
\mathcal{S}(z)
=
-\frac{1}{2}
\int_{-\infty}^{\infty}
\frac{u_t^z-u_t^{-z}}{\sinh(u_t)}dt
=
-\int_{-\infty}^{\infty}
\frac{\sinh(z\log u_t)}{\sinh(u_t)}dt.
$$
Therefore $\mathcal{S}(i\tau)=-i\mathcal{X}(\tau)$.

Let $\rho=\tfrac{1}{2}+z$ be a nontrivial zero of $\zeta$, so
$|\operatorname{Re}(z)|<\tfrac{1}{2}$. By the preceding paragraph,
$\mathcal{S}(z)=0$. Writing $z=i\tau$, equivalently $\tau=-iz$, gives
$|\operatorname{Im}(\tau)|<\tfrac{1}{2}$ and
$\mathcal{X}(\tau)=0$ since $\mathcal{S}(i\tau)=-i\mathcal{X}(\tau)$. By hypothesis, this zero
$\tau$ is real. Hence $z=i\tau$ is purely imaginary, so
$\operatorname{Re}(\rho)=\tfrac{1}{2}$. This proves the Riemann hypothesis (RH).
\end{proof}

The converse is not asserted. The function $\mathcal{X}$ has zeros that do not
come from zeros of $\zeta$; for instance $\mathcal{X}(0)=0$ by construction.
Thus Proposition \ref{prop:X-sufficient-RH} gives only an auxiliary sufficient
condition for RH, not an equivalent reformulation.

\section{A probabilistic derivation of the Gaussian PMT}
\label{app:probabilistic-gaussian-pmt}

The proof of Theorem \ref{thm:gaussian-pmt} in the main text is contour-theoretic. This appendix records a short probabilistic route to the same identity, which was the original source of the Gaussian PMT.

Let $X\sim N(0,1)$ and let $w=\sigma+it$, with $\sigma>0$. For real $r>0$, the vertical-line representation of absolute powers gives
\begin{equation}
|x|^r
=
\frac{\Gamma(r+1)}{2\pi}
\int_{-\infty}^{\infty}
\frac{e^{wx}+e^{-wx}}{w^{r+1}}dt,
\qquad x\in\mathbb{R}.
\label{eq:absolute-power-origin}
\end{equation}
This is the symmetrized Bromwich inversion formula for the power function. Taking expectations in (\ref{eq:absolute-power-origin}) and using
$$
\mathbb{E}e^{wX}=\mathbb{E}e^{-wX}=e^{w^2/2}
$$
gives
\begin{equation}
\mathbb{E}|X|^r
=
\frac{\Gamma(r+1)}{\pi}
\int_{-\infty}^{\infty}
w^{-(r+1)}e^{w^2/2}dt.
\label{eq:normal-absolute-origin}
\end{equation}
Combining this with the well-known expression,
$
\mathbb{E}|X|^r
=
\frac{2^{r/2}}{\sqrt{\pi}}
\Gamma\left(\frac{r+1}{2}\right)
$ and using Legendre's duplication formula yields
\begin{equation}
\int_{-\infty}^{\infty}
w^{-(r+1)}e^{w^2/2}dt
=
\frac{\pi 2^{-r/2}}{\Gamma(r/2+1)}.
\label{eq:gaussian-origin-r}
\end{equation}

Set $s=r/2+1$. Then $r=2s-2$, and (\ref{eq:gaussian-origin-r}) becomes
\begin{equation}
\int_{-\infty}^{\infty}
w^{1-2s}e^{w^2/2}dt
=
\frac{\pi 2^{1-s}}{\Gamma(s)},
\qquad s>1.
\label{eq:gaussian-origin-s}
\end{equation}
Finally, substitute $w=\sqrt{2}\tilde{w}$, equivalently
$\tilde{w}=w/\sqrt{2}$. Since $dt=\sqrt{2}d\tilde{t}$,
\[
\int_{-\infty}^{\infty}
w^{1-2s}e^{w^2/2}dt
=
2^{1-s}
\int_{-\infty}^{\infty}
\tilde{w}^{1-2s}e^{\tilde{w}^2}d\tilde{t}.
\]
Thus
$$
\int_{-\infty}^{\infty}
\tilde{w}^{1-2s}e^{\tilde{w}^2}d\tilde{t}
=
\frac{\pi}{\Gamma(s)}.
$$
Since $G(z)=\int_{-\infty}^{\infty}w^{2z}e^{w^2}dt$, this is precisely
\begin{equation}
G\left(\tfrac{1}{2}-s\right)
=
\frac{\pi}{\Gamma(s)}.
\label{eq:probabilistic-G-reciprocal}
\end{equation}
Both sides are entire functions of $s$, so the identity extends from $s>1$ to all $s\in\mathbb{C}$.

\section{Numerical checks}

The identities in the main text are exact analytic identities. This section records a few numerical evaluations of the parabolic integrals as consistency checks. The computations also illustrate the independence of the vertical-line
parameter $\sigma$ within the admissible contour class.
The first table checks the Gaussian identity
$\frac{1}{\pi}G(\tfrac{1}{2}-s)=1/\Gamma(s)$.
The second table checks the zeta identity
$R(s-\tfrac{1}{2})/G(s-\tfrac{1}{2})=\zeta(s)$.

\begin{center}
\begin{small}
\begin{tabular*}{\textwidth}{@{\extracolsep{\fill}}llll@{ }}
        \toprule
        $s$ & Exact / Known Value & Computed Value & Character / Remarks \\
        \midrule
        $1$ & $1$ & $1.00000$ & Clean check, $G(-1/2) = {\pi}$ \\
        $1/2$ & $1/\sqrt{\pi} \approx 0.56419$ & $0.56419$ & $z=0$, pure Gaussian integral \\
        $3/2$ & $2/\sqrt{\pi} \approx 1.12838$ & $1.12838$ & $G(-1) = 2\sqrt{\pi}$ \\
        $-1/2$ & $-1/(2\sqrt{\pi}) \approx -0.28209$ & $-0.28209$ & Tests a negative value \\
        $3/4$ & $\approx 0.81605$ & $0.81605$ & No closed form; genuine numerical test \\
        $1 + i$ & $\approx 1.83074 + 0.56961i$ & $1.83074 + 0.56961i$ & Tests complex arguments \\
        \midrule
        \multicolumn{4}{l}{\textbf{$\sigma$-Independence Demonstration} (Evaluated at $s=3/4$)} \\
        \midrule
        $3/4$ & Fixed across $\sigma \in \{0.5, 1.0, 1.5, 2.0\}$ & $0.81604893909826$ & Demonstrates $\sigma$-invariance \\
        \bottomrule
        \end{tabular*}

\begin{tabular*}{\textwidth}{@{\extracolsep{\fill}}llll@{ }}
        \toprule
        $s$ & Exact / Known Value & Computed Value & Character / Remarks \\
        \midrule
        $3/2$ & $\approx 2.61238$ & $2.61238$ & Real, $\operatorname{Re}(s)>1$; convergent cross-check \\
        $1/2$ & $\approx -1.46035$ & $-1.46035$ & Uses the single-integral formula \\
        $1/2 + 5i$ & $\approx 0.70181 + 0.23104i$ & $0.70181 + 0.23104i$ & Critical line; inside the strip \\
        $1/2 + 14.1347i$ & $0$ (First nontrivial zero) & $\approx 0$ ($| \text{res} | < 10^{-14}$) & Dramatic; integral clearly vanishes \\
        \midrule
        \multicolumn{4}{l}{\textbf{$\sigma$-Independence Demonstration} (Evaluated at $s=3/2$)} \\
        \midrule
        $3/2$ & Fixed across $\sigma \in (0, \sqrt{\pi})$ & $2.612375348685$ & Evaluated at $\sigma \in \{0.5, 1.0, 1.5, 1.7\}$ \\
        \bottomrule
    \end{tabular*}
\end{small}
\end{center}

\end{document}